\newcommand{\N}{\mathbb{N}}
\newcommand{\G}{\mathcal{G}}
\renewcommand{\H}{\mathcal{H}}
\renewcommand{\sp}{\textrm{\textvisiblespace\ }}
\newcommand{\s}{\sp&}
\newcommand{\bott}{$n$& \tiny0& \tiny1& \tiny2& \tiny3& \tiny4& \tiny5&\tiny6&\tiny7&\tiny8&\tiny9& \tiny10& \tiny11& \tiny12& \tiny13&\tiny14&\tiny15&\tiny16&\tiny17&\tiny18&\tiny19&\tiny20&\tiny21}
\newcommand{\mex}{\textrm{mex}}
\newtheorem{thm}{Theorem}
\newtheorem{defn}[thm]{Definition}
\newtheorem{lemma}[thm]{Lemma}
\newtheorem{conj}[thm]{Conjecture}
\newtheorem{alg}[thm]{Algorithm}
\title{Subtraction games with FES sets of size 3}
\author{Danny Sleator and Marla Slusky}
\begin{document}

\maketitle

\begin{abstract}
This paper extends the work done by Angela Siegel \cite{S} on subtraction games in which
 the subtraction set is $\N\setminus X$ for some finite set $X$. Siegel proves that for any
 finite set $X$, the $\G$-sequence is ultimately arithmetic periodic, and that if $|X|=1$ 
or $2$, then it is purely arithmetic periodic. This note proves that if $|X|=3$ then the 
$\G$-sequence is purely arithmetic periodic. It is known that for $|X|\geq 4$ the sequence 
is not always purely arithmetic periodic.
\end{abstract}

\section{Introduction}

The {\em subtraction game} for a set $S\subset \N$ is played on a heap of counters. A move is to choose a number from $S$ 
and remove that many counters from the heap. The set $S$ is called the {\em subtraction set}. We concern ourselves here with {\em normal play}, that is, the last player able to make a move wins. 

An {\em all-but subtraction game} is a subtraction game where the subtraction set consists of all but finitely many 
of the natural numbers. Formally, $S=\N \setminus X$ for some finite set $X\subset \N$. This $X$ is called the {\em finite excluded subtraction set} or FES set. 

As with many games, all-but subtraction games are not interesting when played in isolation; they are more interesting as part of a disjunctive sum. We are therefore interested in the nimbers of the pile sizes so that we can use Sprague-Grundy Theory \cite{LIP} to analyze the positions.

We use $\G(n)$ to mean the nimber of a pile of size $n$, that is, $\G(n) = \mex\{\G(n-s) | s\in S, s\leq n\}$. We will also call this function the {\em nim sequence} of the game.

A nim sequence is {\em periodic} (or {\em ultimately periodic}) if there exist $n_0, p\in \N$ such 
that for all $n \geq n_0$ we have $\G(n+p) = \G(n)$. If $n_0=0$ we say the nim sequence is {\em purely periodic}. It is known that if the subtraction set $S$ is finite, then the nim sequence is periodic.

A nim sequence is {\em arithmetic periodic} (or {\em ultimately arithmetic periodic}) if there exist $n_0, p, s\in \N$ such 
that for all $n\geq n_0$ we have $\G(n+p)=\G(n)+s$. We call $n_0$ the {\em preperiod}, $p$ the {\em period}, and $s$ the {\em saltus}. If 
$n_0=0$ we say the nim sequence is {\em purely arithmetic periodic}. Siegel proved \cite{S} that the nim sequence of any
 all-but subtraction game is arithmetic periodic.

Siegel also showed that in an all-but subtraction game, if the size of the FES set is 1 or 2, then the nim sequence is purely arithmetic periodic. We show in this paper that the same is true if the FES set has size 3.

\section{Background}

In calculating the nim sequence, it is natural to repeatedly apply the definition of the $\G$ function. However, for the proofs in this paper, we require a different algorithm which we will call the FES algorithm.

\begin{alg}\label{fes} (FES Algorithm)\\
 For $k=0, 1, 2, \ldots$ find all $m$ such that $\G(m)=k$ as follows:\\
 \hspace*{.3 in} Let $n=\min\{i:\G(i) \textrm{ is unknown }\}$. $\G(n)=k$.\\
 \hspace*{.3 in} For $x\in X$ considered in increasing order,\\
 \hspace*{.6 in} If $\G(n+x)$ is unknown and for all $m<n+x$ with $\G(m)=k$ we have $(n+x)-m\in X$ \\
 \hspace*{.6 in} then $\G(n+x)=k$.
\end{alg}

\begin{thm}
 Algorithm \ref{fes} correctly calculates the nim sequence.
\end{thm}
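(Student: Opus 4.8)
The plan is to prove correctness by induction on the nim-value $k=0,1,2,\dots$, maintaining the invariant that once the algorithm finishes the iteration for $k$ it has assigned the value $k$ to exactly the positions of true nim-value $k$ and has left unassigned exactly the positions of true value greater than $k$. Throughout, $\G$ denotes the nim sequence defined by the $\mex$ recurrence of the introduction, a move from $p$ reaches a position $m$ exactly when $m<p$ and $p-m\notin X$ (equivalently $p-m\in S$), and I write $A_k=\{m:\G(m)=k\}$. The inductive hypothesis says that on entering the iteration for $k$ the assigned positions are exactly $\{m:\G(m)<k\}$, so $n=\min\{i:\G(i)\text{ unknown}\}$ equals $\min\{m:\G(m)\ge k\}$. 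Two one-line bounds then pin down its value: every position below $n$ carries a value $<k$, so the values reachable from $n$ all lie below $k$ and $\G(n)\le k$; and $\G(n)\ge k$ by the choice of $n$. Hence $\G(n)=k=\min A_k$, which justifies the algorithm's opening assignment.

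The structural core is the lemma $A_k\subseteq\{n\}\cup\{n+x:x\in X\}$. If $p\in A_k$ and $p>n=\min A_k$, then $n$ cannot be reachable from $p$ (else the value $k=\G(n)$ would appear among the moves from $p$, contradicting $\G(p)=k$), forcing $p-n\in X$. This is exactly why the algorithm only tests the candidates $n+x$: nothing else can acquire the value $k$. For a candidate $p=n+x$, the moves from $p$ that land on value $k$ are the $m\in A_k$ with $m<p$ and $p-m\notin X$, so $p$ misses the value $k$ (equivalently $\G(p)\le k$) if and only if $p-m\in X$ for every $m\in A_k\cap[0,p)$ --- which is verbatim the test the algorithm performs, provided the currently-known value-$k$ positions below $p$ are exactly $A_k\cap[0,p)$.

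Securing that proviso is the main obstacle, and I would dispatch it with an inner induction over $x\in X$ in increasing order. The key observation is that consecutive candidates $n+x_i<n+x_{i+1}$ enclose no value-$k$ position (such a position would be some $n+x'$ with $x_i<x'<x_{i+1}$, impossible since $X$ is listed in order, while $n$ lies below every candidate), so the known value-$k$ set keeps pace with the true set $A_k$ as the candidates are processed. With the correct set in hand the equivalence ``the algorithm assigns $k$ to $p$'' $\iff$ $\G(p)\le k$ is exact. To promote $\G(p)\le k$ to $\G(p)=k$ I avoid arguing that every smaller value survives as a move; instead I reuse the invariant: since $p$ is still unassigned when it is reached, the hypothesis gives $\G(p)\ge k$, and the two bounds coincide. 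Soundness of the iteration (only members of $A_k$ receive the value $k$) is this same equivalence, while completeness (every member of $A_k$ receives $k$) holds because each element of $A_k$ is a tested candidate that passes. As every $\G(m)$ is finite, each position is assigned during its own iteration, so the algorithm's output agrees with $\G$ everywhere.
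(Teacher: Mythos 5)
Your proof is correct and follows essentially the same route as the paper's: induction on the nim-value $k$ with the invariant that after iteration $k$ exactly the positions of value at most $k$ are labeled, pinning down $\G(n)=k$ by the two bounds and characterizing the remaining $k$'s as the positions from which no $k$ is reachable. You merely make explicit two points the paper leaves implicit --- that any later occurrence of $k$ must lie at some $n+x$ with $x\in X$, and that the inner loop's ``currently known'' $k$-positions agree with the true ones when each candidate is tested --- which is a welcome tightening of the same argument.
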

\begin{proof}
The proof is by induction.
   The induction hypothesis is that after $k$ iterations the algorithm
   has corrected labeled all the positions that have a nimber
   in the set $\{0, 1, \ldots, k-1\}$.  We need to prove that the next iteration
   correctly computes the placement of the $k$'s.  We know that
   position $n$ must have a $k$ in it, because its value (by induction)
   is $> k-1$, but it cannot be $>k$ because there are no $k$'s before it.
   The positions where it places the remaining $k$'s are precisely
   those where you cannot reach a $k$ by making a move.  Thus
   by the same argument they must have a value of $k$.
\end{proof}
As an example, here are the first few steps of the algorithm carried out for the FES set $X=\{2, 3, 6, 8\}$\\

\begin{tabular}{ccccccccccccccccccccccc}
\\
 $\G(n)$&\s\s\s\s\s\s\s\s\s\s\s\s\s\s\s\s\s\s\s\s\s\sp\\
\bott\\
\\
 $\G(n)$& 0& \s 0& \s\s\s\s\s0&\s\s\s\s\s\s\s\s\s\s\s\s\sp\\
 \bott\\
\\
 $\G(n)$& 0&  1 & 0&  1&\s\s\s\s 0& 1&\s\s\s\s\s\s\s\s\s\s\s\sp\\
 \bott\\
\\
 $\G(n)$& 0&  1 & 0&  1& 2&\s 2&\s 0& 1&\s\s 2&\s\s\s\s\s\s\s\s\sp\\
 \bott\\
\\
 $\G(n)$& 0&  1 & 0&  1& 2& 3& 2& 3& 0& 1&\s\s 2& 3&\s\s\s\s\s\s\s\sp\\
 \bott\\
\\
$\G(n)$& 0&  1 & 0&  1& 2& 3& 2& 3& 0& 1& 4&\s    2& 3&\s   \s 4&\s    4&\s\s\sp\\
 \bott\\
\\
$\G(n)$& 0&  1 & 0&  1& 2& 3& 2& 3& 0& 1& 4& 5& 2& 3& 5&\s 4& 5& 4&\s\s\sp\\
 \bott\\
&&&&&&&&&&$\vdots$
\\\\
\end{tabular}

Note that after the first $k-1$ steps of this algorithm have been carried out, if we want to find which 
piles have nimber $k$, we don't need to know which piles have which nimber, only which piles have nimbers 
less than $k$. For this reason, after each step, we want to just think about $n$-values as either having a nimber 
or not yet having a nimber. Define the function 
$\H_k(n) : \N \to\{\textrm{ * }, \textrm{ \sp\ }\}$ by $\H_k(n)=$ * iff $\G(n) < k$ and $\H_k(n)=$\nolinebreak \sp\   iff $\G(n) \geq k$. For example, compare $\H_3$ to our state of knowledge after the third iteration of the FES algorithm.

\begin{tabular}{ccccccccccccccccccccccc}
\\
 $\H_3(n)$&  *&   * &  *&   *&  *&\s  *&\s  *&  *&\s\s  *&\s\s\s\s\s\s\s\s\sp\\
 \bott\\
\\
\end{tabular}

Note that the beginning ``chunk'' is all *s and the end ``chunk'' is all blanks. We are interested in the middle ``chunk,'' the short interval in which $\H_k$ takes on values of both   * and \sp.

\begin{lemma}
For a fixed $k$, let $n = \min\{i:\H_k(i)=\textrm{\sp }\}$. Then for $m \geq n+\max (X)$, we have $\H_k(m)=$\sp. 
 \end{lemma}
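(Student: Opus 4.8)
The plan is to derive $\G(m)\ge k$ directly from the $\mex$ definition. Recall $\G(m)=\mex\{\G(m-s):s\in S,\ s\le m\}$, so $\G(m)\ge k$ holds exactly when each of the values $0,1,\ldots,k-1$ occurs as $\G(m-s)$ for some legal move $s$. Hence it suffices to show that for every $j<k$ there is a position $p<m$ with $\G(p)=j$ that is \emph{reachable} from $m$, meaning $m-p\in S$, equivalently $m-p\notin X$.

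First I would record a reachability fact that does not depend on $k$ at all: since every element of $X$ is at most $\max(X)$, any position $p$ with $p\le m-\max(X)-1$ satisfies $m-p\ge\max(X)+1$, hence $m-p\notin X$, and is therefore reachable from $m$. In other words, the only positions below $m$ that $m$ cannot reach are the at most $|X|$ positions $m-x$ with $x\in X$, all of which lie in the top block $[\,m-\max(X),\,m-1\,]$. Because we assume $m\ge n+\max(X)$, this gives $n-1\le m-\max(X)-1$, so \emph{every} position in $\{0,1,\ldots,n-1\}$ is reachable from $m$.

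Next I would show that all of the values $0,1,\ldots,k-1$ already appear among the positions $0,1,\ldots,n-1$. By the choice of $n$ we have $\G(i)<k$ for every $i<n$ while $\G(n)\ge k$. Every position reachable from $n$ lies in $\{0,\ldots,n-1\}$ and so carries a nimber $<k$; since the $\mex$ of a set contained in $\{0,\ldots,k-1\}$ is at most $k$, this forces $\G(n)=k$. But $\mex$ attaining the value $k$ means precisely that each of $0,1,\ldots,k-1$ occurs among the positions reachable from $n$, all of which lie in $\{0,\ldots,n-1\}$.

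Combining the two steps completes the argument: for each $j<k$ choose $p_j\in\{0,\ldots,n-1\}$ with $\G(p_j)=j$; by the reachability fact each $p_j$ is reachable from $m$, so every value below $k$ is represented among the moves from $m$, whence $\G(m)\ge k$, that is, $\H_k(m)=$\sp. The step most worth getting right is the middle one, namely that every nimber below $k$ must already occur by position $n$; this comes from forcing $\G(n)=k$ through the $\mex$. Once that is established the reachability fact is essentially immediate, since $X$ can block at most $\max(X)$ of the positions just below $m$, and these all sit well above the early positions as soon as $m\ge n+\max(X)$.
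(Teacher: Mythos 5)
Your proof is correct and takes essentially the same route as the paper's: both arguments find, for each value $j<k$, a representative position below $n$ and observe that $m\ge n+\max(X)$ makes every position below $n$ reachable from $m$, forcing $\G(m)\ge k$. Your middle step (using the $\mex$ to show $\G(n)=k$ and hence that every $j<k$ occurs below $n$) merely makes explicit a point the paper leaves implicit when it asserts that the smallest occurrence of each $\ell<k$ lies before $n$.
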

\begin{proof}
 Let $m \geq n+\max (X)$, let $\ell<k$ be a nimber, and let $p$ be the smallest pile such that $\G(p)=\ell$. Then $p < n$. 
 Then since $m \geq n + \max (X)  > p+\max (X)$, we can move from a pile of size $m$ to a pile of size $p$. 
	This tells us that 
 $\G(m) \neq \ell$. Thus $\G(m) \geq k$ so $\H_k(m)=$\sp.
\end{proof}

\begin{defn}
 The {\em boundary pattern} of $\H_k$ is the sequence \\$\H_k(n), \H_k(n+1), \H_k(n+2), \ldots, \H_k(n+\max (X)-1)$ where
 $n=\min\{i:\H_k(i)=\textrm{\sp }\}$.
\end{defn}

These boundary patterns characterize the $\H_k$;s in that knowing the boundary pattern of $\H_{k-1}$ is sufficient information to find the boundary pattern of $\H_k$. Note that the sequence of boundary patterns is ultimately 
periodic if and only if the  nim sequence is ultimately arithmetic periodic, and that the sequence of boundary patterns is purely periodic if and only if the nim sequence is purely arithmetic periodic. 

%%%%%%%%%%%%%%%%%%%%%%%%%%%%%%%%%%%%%
%     Page 3                       %
%%%%%%%%%%%%%%%%%%%%%%%%%%%%%%%%%%%%%

We can use this observation to reprove Siegel's Theorem 8.

\begin{thm}
 Given a finite set $X$, the nim sequence for the all-but subtraction game on with FES set $X$ is eventually arithmetic periodic.
\end{thm}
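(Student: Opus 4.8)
The plan is to view the passage from $\H_{k-1}$ to $\H_k$ as one step of a deterministic dynamical system on a \emph{finite} state space, and then apply the pigeonhole principle.

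First I would record the normal form of $\H_k$ that the Lemma provides. Writing $n_k$ for the least index $i$ at which $\H_k(i)$ is blank, every position below $n_k$ carries a $*$, while the Lemma shows that every position at or beyond $n_k+\max(X)$ is blank. Hence $\H_k$ is completely determined by the single window $\H_k(n_k),\H_k(n_k+1),\ldots,\H_k(n_k+\max(X)-1)$, which is exactly its boundary pattern. This boundary pattern is a string of length $\max(X)$ over a two-letter alphabet, so there are at most $2^{\max(X)}$ distinct boundary patterns in all.

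Next I would invoke the fact noted in the discussion above: the boundary pattern of $\H_{k-1}$ already determines the boundary pattern of $\H_k$. This gives a well-defined transition map $f$ from the finite set $P$ of boundary patterns to itself, and the sequence $B_0,B_1,B_2,\ldots$ of boundary patterns (with $B_k$ the boundary pattern of $\H_k$) satisfies $B_{k+1}=f(B_k)$. Since $P$ is finite, there are indices $k_1<k_2$ with $B_{k_1}=B_{k_2}$; because $f$ is a function, a routine induction yields $B_{k+(k_2-k_1)}=B_k$ for all $k\geq k_1$, so the sequence of boundary patterns is ultimately periodic. To finish, I would quote the equivalence recorded above: the boundary-pattern sequence is ultimately periodic precisely when the nim sequence is ultimately arithmetic periodic. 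Applying this equivalence to the ultimate periodicity just established gives the theorem.

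The step I expect to carry all the weight is the assertion that $f$ is genuinely a function of the length-$\max(X)$ window alone, independent of the absolute index $n_{k-1}$ and of anything outside that window. The Lemma does the crucial localization on the right (it caps how far the transition region can reach), and the backward reach of any single move is likewise at most $\max(X)$; what a full proof must still confirm is that the FES placement of the $(k-1)$'s, together with the new left end of the window of $\H_k$, can be read off from the old boundary pattern. This determinism is exactly what the text asserts, so here I may take it as given; once it is in hand, everything else is the soft finiteness-and-pigeonhole argument above.
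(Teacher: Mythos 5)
Your proposal is correct and follows essentially the same route as the paper: both arguments treat the boundary patterns as a finite state space with a deterministic transition map, apply pigeonhole to get eventual periodicity of the pattern sequence, and then invoke the stated equivalence with arithmetic periodicity of the nim sequence. Your explicit flagging of the determinism of the transition map as the load-bearing (and text-asserted) step matches what the paper also takes as given.
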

\begin{proof}
 Consider directed graph where the vertex set is all possible boundary patterns, and each vertex has
exactly one 
outgoing edge pointing to the next boundary pattern in the sequence. Since this graph is finite, 
(it has at most $2^{\max( X) -1}$ vertices) following any path eventually leads into a cycle, so the  nim sequence 
 is eventually ultimately periodic.
\end{proof}

We will need the following crucial lemma from Siegel \cite{S}.
\begin{lemma}\label{15}
 If $X = \{a, b, a+b\}$ where $b>a$ and $b\neq 2a$, then the nim sequence of the associated all-but subtraction game contains every non-negative integer $k$ exactly three times with either $k=\G(n)=\G(n+a)=\G(n+a+b)$ or, failing that, $k=\G(n)=\G(n+b)=\G(n+a+b)$.
\end{lemma}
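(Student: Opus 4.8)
The plan is to run the FES algorithm (Algorithm \ref{fes}) and argue by strong induction on $k$ that the value $k$ is placed in exactly three positions forming one of the two advertised patterns. Write $n_k$ for the least position that is still unknown when the algorithm begins placing $k$ (equivalently, the least $i$ with $\H_k(i)$ blank). Since $a<b<a+b$, the excluded shifts $x\in X$ are processed in the order $a,b,a+b$. The inductive hypothesis I would carry is: for every $j<k$, the value $j$ occupies exactly the triple $T_j=\{n_j,\,n_j+a,\,n_j+a+b\}$ or $T_j=\{n_j,\,n_j+b,\,n_j+a+b\}$, with $n_j$ its least element, and in the second case $\G(n_j+a)<j$. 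I would also record the elementary monotonicity $n_0<n_1<n_2<\cdots$, which is immediate because processing $j$ deletes $n_j$ from the set of unknown positions.

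For the inductive step I trace the three shifts from $n:=n_k$. The position $n$ receives $k$. For $x=a+b$ the position $n+a+b=n+\max(X)$ is still unknown, by the Lemma guaranteeing that $\H_k(m)$ is blank for all $m\ge n+\max(X)$; and every smaller $k$-labelled position sits at a distance in $X$ from it (namely $a+b$ together with $b$ or $a$), so $n+a+b$ always receives $k$ as well. Thus $n$ and $n+a+b$ are always two of the three occurrences. The middle occurrence is governed by a short distance computation that uses $b\neq 2a$: since neither $b-a$ nor $2a$ lies in $X$, if $n+a$ is unknown it receives $k$ (the first pattern) and then $n+b$ is blocked, because its distance $b-a$ to $n+a$ is a legal move; whereas if $n+a$ is already labelled, then $n+b$, provided it is unknown, receives $k$ (the second pattern).

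The crux, and the step I expect to be the main obstacle, is showing that the third occurrence is genuinely available, i.e.\ that $n+b$ is never already labelled with a value $<k$, so that at least one of $n+a,n+b$ is free and exactly three copies of $k$ are placed. I would isolate this as a claim: suppose $\G(n+b)=j<k$. By the inductive hypothesis $n+b$ lies in the dichotomy-triple $T_j$, and since $n_j<n_k=n$ we have $n+b>n_j$, so $n+b$ is not the least element of $T_j$. A quick check rules out $n+b$ being the middle element (it would force $n_j=n+b-a>n$ or $n_j=n$, both contradicting $n_j<n$), so $n+b=n_j+a+b$, i.e.\ $n_j=n-a$. But then in the first pattern the middle element $n_j+a$ equals $n$, forcing $\G(n)=j<k$, while in the second pattern the inductive hypothesis gives $\G(n_j+a)=\G(n)<j<k$; either way this contradicts $\G(n)\ge k$, which holds because $n$ is unknown. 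Hence $n+b$ is unknown, the third $k$ is placed, and the triple has exactly the claimed form, closing the induction.

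The remaining points are routine. The base case $k=0$ has $U_0=\N$, so the trace yields $\{0,a,a+b\}$. As an independent sanity check (not needed for the induction), any two positions sharing a value must differ by an element of $X$, since otherwise the larger could move to the smaller, contradicting the $\mex$ definition; and a set all of whose pairwise differences lie in $\{a,b,a+b\}$ with $b\neq 2a$ has at most three elements, which recovers the ``at most three'' bound and the two patterns. The genuinely new ingredient is the availability claim for $n+b$; everything else is bookkeeping inside Algorithm \ref{fes} together with the tail Lemma.
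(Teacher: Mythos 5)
Your proof is correct and takes essentially the same route as the paper: induction on $k$, tracing the FES algorithm through the shifts $a,b,a+b$ in order, with the crux being that $n+b$ cannot already carry a smaller label --- your ``$n_j=n-a$ forces $\G(n)<k$'' contradiction is exactly the paper's ``$m+a+b=n+b$, hence $m+a=n$'' argument. If anything, your strengthened induction hypothesis (recording $\G(n_j+a)<j$ in the second pattern) makes the final contradiction slightly more explicit than the paper's phrasing.
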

\begin{proof}
 The proof is by induction on $k$. Since every previous nimber appears only three times, 
$k$ will appear somewhere. Set $n = \min\{i:\G(i)=k\}$. Following the FES algorithm, if we set $\G(n+a)=k$, then 
since $(n+b)-(n+a)\notin X$, $\G(n+b)\neq k$. If $\G(n+a) < k$ then the FES algorithm sets $\G(n+b)=k$ 
unless $\G(n+b)<k$. If this is the case, take $\G(n+b)=\ell$ and $m=\min\{i:\G(i)=\ell\}$. Then $m<n$ so 
$n+b=m+a+b$. However this implies that $m+a=n$, so the FES algorithm would have set $\G(n)=\ell$. 
This is a contradiction, so $\G(n+b)=k$.\\

\begin{tabular}{ccccccccccccccccccccccc}
 $\G(i)$      & \s   \s    $\ell$     &      \s  \s  \s   $k$&  \s   \s  \s     $*$     &   \s  \s     $\ell$&\s  \s  \s  \s        \s  \sp \\
 $i$          &    &    &             &   &    &   &\tiny $n$&    &    &   & \tiny $n+a$&  & &\tiny $n+b$&   &   &   &\tiny $n+a+b$&   &    \\
              &    &    & \tiny $m$           &   &    &   &\tiny $m+a$ &    &    &   &     &  & &\tiny $m+a+b$&   &   &   &           &   &    \\
\end{tabular}

The last step of the FES algoithm for finding $i$ with $\G(i)=k$ considers $\G(n+a+b)$. However, $n+a+b$ is stricly greater than any previously considered pile sizes, so $\G(n+a+b)$ is so far unknown. Thus the FES algorithm sets $\G(n+a+b)=k$. 

\end{proof}

\section{FES sets of size 3}
Let $\G_X$ be the nim sequence for the all-but subtraction game with FES set $X$. Siegel shows that if $|X|=1$ or $|X|=2$ then $\G_X$ is purely arithmetic periodic. She makes some conjectures about $|X|=3$, but leaves it largely open. In this section we show that if $|X|=3$ then $\G_X$ is purely arithmetic periodic.

\begin{lemma}
\label{a+b}
 If $X=\{a,b,a+b\}$ then $\G_X$ is purely periodic.
\end{lemma}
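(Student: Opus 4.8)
The plan is to leverage Lemma \ref{15}, which tells us that when $X=\{a,b,a+b\}$ with $b \neq 2a$, every nimber $k$ appears exactly three times, at positions $n$, $n+a$ (or $n+b$), and $n+a+b$. The key observation is that the boundary pattern of $\H_k$ records exactly the information we need to track the periodicity of the nim sequence, and that the sequence of boundary patterns is purely periodic if and only if $\G_X$ is purely arithmetic periodic. So the strategy is to show directly that the boundary pattern sequence is purely periodic, which by the remark after the boundary-pattern definition is equivalent to pure arithmetic periodicity.

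First I would handle the degenerate case $b=2a$ separately, since Lemma \ref{15} explicitly excludes it; here $X=\{a,2a,3a\}$ is just $a$ times the set $\{1,2,3\}$, so the nim sequence decomposes into $a$ interleaved copies of the $\{1,2,3\}$ game, which can be computed directly and shown to be purely arithmetic periodic. For the main case $b \neq 2a$, I would use Lemma \ref{15} to pin down precisely where the three copies of each nimber sit. The crucial structural fact is that each nimber occupies a \emph{triple} of positions of the form $\{n, n+a, n+a+b\}$ or $\{n, n+b, n+a+b\}$, both of which span exactly $a+b=\max(X)$ — the width of the boundary window. This rigidity is what forces periodicity: the way the three copies of nimber $k$ interlock inside the boundary window is completely determined by the boundary pattern of $\H_{k-1}$, so the transition map on boundary patterns is well-defined and deterministic.

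The main obstacle will be proving that the boundary-pattern sequence is \emph{purely} periodic rather than merely eventually periodic (which already follows from Theorem 6). Pure periodicity means the very first boundary pattern must itself lie on the cycle, not on a tail leading into it. To establish this I would argue that the transition map on boundary patterns is injective: if two distinct boundary patterns mapped to the same successor, we could run the FES algorithm backward and derive a contradiction with the exact-three-times placement guaranteed by Lemma \ref{15}. An injective map on a finite set is a bijection, so every vertex in the boundary-pattern graph of Theorem 6 has in-degree one, which rules out any pre-period tail and forces the starting pattern onto a cycle. The delicate part of this injectivity argument is reconstructing, from the successor pattern alone, whether nimber $k$ was placed in the $\{n,n+a,n+a+b\}$ configuration or the $\{n,n+b,n+a+b\}$ configuration; here I expect to use the arithmetic constraint $b \neq 2a$ to show these two cases leave distinguishable traces in the window, so that no information is lost under the transition.
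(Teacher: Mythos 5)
Your overall strategy is the same as the paper's: show that the boundary-pattern transition map is injective by reconstructing $\H_{k-1}$ from $\H_k$, conclude it is a bijection on a finite set, and hence that the initial pattern lies on a cycle. The paper carries out exactly this plan, using Lemma \ref{15} to locate the three occurrences of $k-1$ inside the window ending at $\max\{i:\H_k(i)=*\}=n+a+b$.

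However, the step you defer --- deciding whether $k-1$ sits at $n+a$ or at $n+b$ --- is the entire content of the proof, and the resolution you sketch would not work. You propose to show that the two configurations ``leave distinguishable traces in the window.'' They do not: in the only problematic case, namely when $\H_k(n+a)=\H_k(n+b)=*$, the pattern $\H_k$ is literally identical whichever of the two positions carries $k-1$, since the other position is starred anyway by some smaller nimber, so no inspection of $\H_k$ can tell them apart. What the paper proves instead is that in this ambiguous case one alternative is impossible: if $\G(n+a)=k-1$, then $\G(n+b)=\ell<k-1$ must be the \emph{third} occurrence of $\ell$, so its first occurrence $m$ satisfies $m+a+b=n+b$, i.e.\ $m+a=n$; but then the FES algorithm would have assigned $\ell$ to the still-unoccupied position $m+a=n$ at stage $\ell$, contradicting $\G(n)=k-1$. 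Hence in the ambiguous case it is always $n+b$ that carries $k-1$. Your proposal is missing this elimination argument, which is the crux. (Your instinct to treat $b=2a$ separately is sound, since Lemma \ref{15} excludes it; in the paper that set $\{a,2a,3a\}$ is instead absorbed by the later lemma on FES sets of the form $\{a,b,2a\}$.)
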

\begin{proof}
 Consider again the directed graph of boundary patterns. It is sufficient to prove that every vertex 
has an in-degree of exactly 1 since 
this will imply that the graph is just a collection of one
or more disjoint cycles.  Thus starting from the initial boundary pattern, if we follow the edges, 
we must eventually return to the initial boundary pattern. 

Consider $\H_{k}$. We will show that we can uniquely reconstruct $\H_{k-1}$ from $\H_k$. We know that the first occurences of the nimbers occur in order. That is, $\min\{i : \G(i)=k-1\} \leq \min\{i:\G(i)=k\}$. 
Therefore lemma \ref{15} implies that the third occurences also occur in order. 

From here we know that $\max\{i:\H_k(i)=*\}$ is the third occurence of $k-1$. In other words, if 
$n+a+b = \max\{i:\H_k(i)=*\}$ then $k-1=\G(n+a+b)=\G(n)$.
Now the only question is which of $n+a$ and $n+b$ has a $\G$-value of $k-1$? If only one of $\H_{k}(n+a)$ 
and $\H_{k}(n+b)$ is *, then that is the one with $\G$-value $k-1$. The only possible confusion comes when 
$\H_k(n+a)=\H_k(n+b)=*$. In this situation $\G(n+b)=k-1$.

Assume for contradiction that $\G(n+a)=k-1$. Then $\G(n+b)=\ell < k-1$. Since $\ell <k-1$, we know that $n+b$ must be the third occurence of $\ell$. That is, $\ell = \G(m)=\G(m+a+b)$ and $m+a+b=n+b$.

\begin{tabular}{ccccccccccccccccccccccc}
\\
 $\G(i)$      & \s   \s    $\ell$     &      \s  \s  \s   $k-1$& \s   \s  \s     $k-1$    &   \s  \s     $\ell$&\s  \s  \s  $k-1$&        \s  \sp \\
 $i$          &    &    &              &   &    &   &\tiny $n$&    &    &   & \tiny $n+a$&  & &\tiny $n+b$&   &   &   &\tiny $n+a+b$&   &    \\
              &    &    & \tiny $m$           &   &    &   &\tiny $m+a$ &    &    &   &     &  & &\tiny $m+a+b$&   &   &   &           &   &    \\

\\
\end{tabular}
\\
However, since $\ell < k-1$, this means $\H_{\ell}(m+a) = \sp$. This is a contradiction, because lemma \ref{15} 
implies that $\G(m+a)=\ell$.

Now we know that given $\H_k$ we can find which values of $m$ have $\G(m)=k-1$ and from this we can construct $\H_{k-1}$. This means that from a given boundary pattern, we can uniquely determine the boundary pattern that proceeds it, and so our proof is complete.
\end{proof}

\begin{lemma}
 If $b>a$ and $b\neq 2a$ then $\G_{\{a, b, 2a\}} = \G_{\{a,2a\}}$. 
\end{lemma}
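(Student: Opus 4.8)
The plan is to prove the identity by strong induction on the pile size $n$. Assume $\G_{\{a,b,2a\}}$ and $\G_{\{a,2a\}}$ agree on $[0,n-1]$ and write $g$ for this common restriction. From a pile of size $n$ the two games reach exactly the same smaller piles, except that in $\{a,2a\}$ one may additionally move to $n-b$ when $b\le n$ (if $b>n$ the move is unavailable in both games and the nimbers agree at once). Deleting one option from a mex computation changes the result only if the deleted option was the unique move realizing its nimber, so the induction step reduces to a single claim: in the $\{a,b,2a\}$ game the value $g(n-b)$ is reachable from $n$ by some legal move other than subtracting $b$.

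To verify that claim I first need the fine structure of $\G_{\{a,2a\}}$. Running the FES algorithm exactly as in the proof of Lemma~\ref{15}, but with the two-element set $X=\{a,2a\}$ (so that the loop over $x\in X$ carries no branching), gives by induction on $k$ that every nimber $k$ occurs in $\G_{\{a,2a\}}$ exactly three times, at the $a$-spaced positions $p,\ p+a,\ p+2a$; equivalently one may exhibit the closed form $\G_{\{a,2a\}}(n)=a\lfloor n/(3a)\rfloor+(n\bmod a)$ and check the mex recurrence directly. This three-occurrences fact is the one real ingredient, and establishing it cleanly is the step I expect to be the main obstacle.

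With it the induction step is a short case analysis on the three positions $p,p+a,p+2a$ carrying the value $v=g(n-b)$, one of which is $n-b$. If $n-b$ is not the largest of the three, then $n-b+a$ also carries value $v$; it lies in $[0,n-1]$ since $a<b\le n$, and it avoids the forbidden targets $n-a$ and $n-2a$ precisely because $b\neq 2a$ and $b\neq 3a$, so it is a legal move realizing $v$. If instead $n-b=p+2a$, then $n-b-a=p+a\ge a$ again carries value $v$, lies in range, and avoids $n-a,n-2a$ because $b>a$; here $n-b=p+2a\ge 2a$ forces this substitute to be non-negative, so no genuine small-$n$ boundary case arises. In both cases $v$ survives the deletion of the $b$-move, the two mex values coincide, and the induction closes.

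The case analysis exposes one hypothesis the bare statement omits: the substitute $n-b+a$ avoids $n-2a$ only when $b\neq 3a$, and this is essential, since for $b=3a$ the set is the sum-form set $\{a,2a,3a\}=\{a,2a,a+2a\}$, for which the claimed identity genuinely fails (for instance $\G_{\{2,4,6\}}(6)=0\neq 2=\G_{\{2,4\}}(6)$). I would therefore run the argument under the additional assumption $b\neq 3a$, the excluded value being exactly the sum-form case that lies outside the scope of this reduction.
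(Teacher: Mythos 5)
Your proof is correct but takes a genuinely different route from the paper's. The paper argues at the level of the FES algorithm: it shows by induction that every boundary pattern that arises has the form ``$i$ blanks, $a-i$ stars, $i$ blanks, $a-i$ stars,'' and that on such a pattern the algorithm places $k$ at $n$, $n+a$, $n+2a$ but never at $n+b$, so it runs exactly as it would for $X=\{a,2a\}$. You instead compare the two mex computations directly, by strong induction on the pile size, reducing everything to the question of whether the value $\G_{\{a,2a\}}(n-b)$ is realized by some option of $n$ other than subtracting $b$; you answer that using the three-occurrence, $a$-spaced structure of $\G_{\{a,2a\}}$. Your route needs that structural fact as an external ingredient (it is part of Siegel's analysis of two-element FES sets, and your closed form $\G_{\{a,2a\}}(n)=a\lfloor n/(3a)\rfloor+(n\bmod a)$ is easily checked against the mex recurrence), whereas the paper's route produces the structure and the equality simultaneously. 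Both are short; yours isolates the single point where the two games could differ, which is arguably more transparent.

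More importantly, your observation about $b=3a$ is a genuine catch and not an artifact of your method. The paper's proof asserts that ``$n+b$ has $n+a$ as an option,'' which requires $b-a\notin\{a,b,2a\}$; the hypothesis $b\neq 2a$ rules out $b-a=a$, but nothing rules out $b-a=2a$. When $b=3a$ the FES algorithm does place $k$ at $n+b$, the claimed boundary patterns do not persist, and the lemma is false as stated: your counterexample $\G_{\{2,4,6\}}(6)=0\neq 2=\G_{\{2,4\}}(6)$ checks out. So the statement needs the additional hypothesis $b\neq 3a$ (equivalently, it should be restricted to $a<b<2a$, the only regime in which the final theorem actually needs this lemma, since there $2a$ is the largest element of $X$ and $b=3a$ cannot occur); the excluded set $\{a,2a,3a\}$ is of the sum form and belongs with the $\{a,b,a+b\}$ case.
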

\begin{proof}
We show by induction that the only boundary patterns that arise have the form \\
 \sp\sp\sp\sp***\sp\sp\sp\sp***. That is, $i$ copies of \sp followed by $a-i$ copies of *, then another $i$ copies of \sp and another $a-i$ copies of *.

Our first boundary pattern is blank and so it has the form desired form with $i=a$. From such a 
boundary pattern we run the FES algorithm. Take $n= \min\{i:\G(i) \textrm{ is unknown }\}$. Then $\G(n)=k$. Then $\G(n+a)=k$. Then $n+b$ has $n+a$ as an option so $\G(n+b)\neq k$. Then $\G(n+2a)=k$. The new boundary pattern is of the same from, with $i$ decreased by 1 (mod $a$).

Since the FES algorithm does never sets $\G(n+b)=k$, it runs the same as it would if $X=\{a, 2a\}$.
\end{proof}

\begin{lemma}
 If $b\neq 2a$ and $X=\{a, b, 2b\}$ then $\G_X = \G_{\{a\}}$.
\end{lemma}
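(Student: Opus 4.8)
The plan is to run the FES algorithm for $X=\{a,b,2b\}$ side by side with the FES algorithm for the singleton FES set $\{a\}$ and prove by induction on $k$ that the two runs place every nimber in exactly the same positions. Throughout I assume $a<b$ (as in the preceding lemmas), so that the elements of $X$ in increasing order are $a<b<2b$ and $a$ is the first value examined at each step of the algorithm. Recall from the $|X|=1$ case that in the game with FES set $\{a\}$ each nimber occupies exactly two piles, its first occurrence $n$ and the pile $n+a$. The induction hypothesis is that after nimbers $0,1,\dots,k-1$ have been placed, the labelled piles and their labels are identical in the two games.

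For the inductive step, let $n=\min\{i:\G(i)\text{ is unknown}\}$; this is the same in both games by the hypothesis, and both set $\G(n)=k$. Since $a$ is the least element of $X$, the step $x=a$ is examined first in the larger game, and it behaves exactly as in the game with FES set $\{a\}$: the only current $k$ lies at $n$, the distance $(n+a)-n=a\in X$, and $n+a$ is still unknown, so $\G(n+a)=k$ in both runs. It then suffices to show that the two remaining steps of the larger game place nothing. When $x=b$ is examined, the pile $n+b$ is labelled $k$ only if $(n+b)-m\in X$ for every current $k$-pile $m<n+b$; taking $m=n+a$ gives the distance $b-a$, and since $b\neq 2a$ and $a\ge 1$ we have $b-a\notin\{a,b,2b\}=X$, so nothing is placed. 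When $x=2b$ is examined the $k$-piles are still just $n$ and $n+a$, and the distance from $n+a$ is $2b-a$, which lies in $X$ only if $2b-a\in\{a,b,2b\}$, i.e. only if $b=a$ or $a=0$; both are impossible, so again nothing is placed. Hence the larger game also labels exactly $\{n,n+a\}$ with $k$, the two configurations remain identical, and the induction closes, giving $\G_{\{a,b,2b\}}=\G_{\{a\}}$.

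The argument is almost entirely mechanical once the set-up is fixed, so the real work lies in three small points. First is the ordering: the reduction relies on $a$ being examined before $b$ and $2b$, which is exactly why $a<b$ is needed (for $b<a$ the same FES set need not reduce to $\{a\}$ at all). Second, one must confirm that $n+a$ is genuinely unknown when the step $x=a$ is reached; this holds because in the $\{a\}$ game the first occurrences strictly increase, so $n+a$ is neither an earlier first occurrence (those lie below $n$) nor an earlier second occurrence (that would force a first occurrence at $n$). Third, and most importantly, is the distance computation $b-a\notin X$: this is the one place the hypothesis $b\neq 2a$ is used, and it is precisely the step that fails when $b=2a$, since then $b-a=a\in X$ and the algorithm would plant a spurious third copy of $k$ at $n+b$. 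This mirrors the role played by $b\neq 2a$ in Lemma~\ref{15} and in the two preceding lemmas.
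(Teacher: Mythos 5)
Your proof is correct and follows essentially the same route as the paper's: both run the FES algorithm inductively, observe that each nimber lands exactly at $n$ and $n+a$, and rule out $n+b$ and $n+2b$ by noting that their distances $b-a$ and $2b-a$ from the already-placed $k$ at $n+a$ do not lie in $X$ (the paper phrases this as ``$n+b$ and $n+2b$ have $n+a$ as an option''). Your write-up is merely more explicit about two points the paper leaves tacit, namely the standing assumption $a<b$ and the verification that $n+a$ is still unlabelled when the step $x=a$ is reached.
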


\begin{proof}
 The argument is similar to the previous one. We show by induction the only boundary patterns 
that arise have the form  \sp\sp\sp\sp***. That is, $i$ copies of \sp followed by $a-i$ copies of *.

Our first boundary pattern is blank and so it has the form desired form with $i=a$. From such a boundary patterns we run the FES algorithm. Take $n= \min\{i:\G(i) \textrm{ is unknown }\}$. Then $\G(n)=k$. Then $\G(n+a)=k$. Then $n+b$ and $n+2b$ have $n+a$ as an option so $\G(n+b)\neq k$ and $\G(n+2b)\neq k$. The new boundary pattern is of the same from, with $i$ decreased by 1 (mod $a$).

Since the FES algorithm does never sets $\G(n+b)=k$ nor $\G(n+2b)=k$, it runs the same as it would if $X=\{a\}$.

\end{proof}

\begin{lemma}
 Let $X = \{a,b,c\}$ where $c\neq a+b$, $c\neq 2a$ and $c \neq 2b$. Then $\G_{\{a,b,c\}} = \G_{\{a,b\}}$.
\end{lemma}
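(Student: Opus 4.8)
The plan is to imitate the reductions in the two preceding lemmas: run the FES algorithm for $X=\{a,b,c\}$ and show that the largest element $c$ is never ``used,'' so that the algorithm proceeds exactly as it would for the two-element set $\{a,b\}$. Throughout I take $a<b<c$ (the discarded element is the largest). This ordering matters: with it the three hypotheses $c\neq a+b$, $c\neq 2a$, $c\neq 2b$ together with distinctness rule out every doubling and every sum relation among the three numbers, whereas if $c$ were not largest the stated hypotheses would fail to exclude a relation such as $b=a+c$ and the conclusion could genuinely break.

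First I would record the structural fact I need about the FES algorithm: at iteration $k$ every position receiving the label $k$ lies in $\{n\}\cup(n+X)$, where $n=\min\{i:\G(i)\text{ is unknown}\}$, because any $p$ with $p-n\notin X$ can move to the $k$-position $n$ and so cannot have nimber $k$. Thus for $\{a,b,c\}$ the only candidates are $n,\,n+a,\,n+b,\,n+c$, examined in this increasing order. I would also first establish the structure of the two-element game $\{a,b\}$: each nimber occupies the positions $\{m,m+a\}$ (together with $m+2a$ when $b=2a$), and the minimum unknown position strictly increases, which forces $n+a$ to be unknown at every iteration.

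The heart of the argument is an induction on $k$ run in lockstep for $X=\{a,b,c\}$ and $X'=\{a,b\}$. The induction hypothesis is that after $k$ iterations both algorithms have labeled exactly the same positions with the same nimbers, the common labeling being that of $\{a,b\}$; in particular $n$ agrees and $n+a$ is unknown. In the inductive step I check the four candidates. Both runs label $n$ and then $n+a$ (here $a\in X\cap X'$ and the only smaller $k$-position is $n$, with $(n+a)-n=a$). For $n+b$ the two runs agree: it is labeled precisely when $(n+b)-(n+a)=b-a$ lies in the relevant excluded set, i.e.\ when $b=2a$, and the enlarged set $X$ cannot change this verdict because $b-a\neq c$ (as $c>b$). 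Finally $n+c$ is never labeled in the $X$-run, since the test against the $k$-position $n+a$ fails: $(n+c)-(n+a)=c-a$ is not in $X$, because $c-a\neq a$ (from $c\neq 2a$), $c-a\neq b$ (from $c\neq a+b$), and $c-a\neq c$. Hence both runs place the label $k$ on identical positions, the induction continues, and $\G_{\{a,b,c\}}=\G_{\{a,b\}}$.

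I expect the main obstacle to be keeping the two simulations genuinely in lockstep, rather than the individual candidate checks, which are short. The two delicate points are (i) justifying that $n+a$ is unknown at every iteration, which relies on carrying the \emph{full} structural description of the two-element game inside the induction hypothesis rather than merely ``the labels agree so far''; and (ii) ensuring that the larger excluded set $X\supset X'$ never makes the membership test \emph{easier} and thereby labels a position (such as $n+b$) that $\{a,b\}$ would leave blank. Point (ii) is exactly where the ordering $c>b$ and the conditions $c\neq 2a$, $c\neq a+b$ are consumed. I expect the hypothesis $c\neq 2b$ to be unnecessary for correctness here (the identity persists when $c=2b$, since then $\G_{\{a,b\}}=\G_{\{a\}}$ agrees with the $\{a,b,2b\}$ lemma); it is present only to keep this generic case disjoint from that lemma.
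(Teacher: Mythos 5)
Your proposal is correct, but it takes a genuinely different route from the paper's. The paper argues by contradiction: it supposes $n$ is the first position where $\G_{\{a,b,c\}}$ and $\G_{\{a,b\}}$ disagree, observes that the two mex sets at $n$ differ only in the single value $\G(n-c)$, concludes $\G(n-c)=k$, and then splits into two cases according to whether $\G(n-c+a)=k$, each case contradicting one pair of the hypotheses ($c\neq 2a$ and $c\neq a+b$ in the first, $c\neq a+b$ and $c\neq 2b$ in the second). You instead run the FES algorithm forward in lockstep for $\{a,b,c\}$ and $\{a,b\}$ and show that the candidate $n+c$ is always blocked by the $k$ already placed at $n+a$, in exactly the style of the paper's two preceding lemmas. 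The trade-off: your argument requires the auxiliary structural fact that in the $\{a,b\}$ game each nimber occupies $\{n,n+a\}$ (or $\{n,n+a,n+2a\}$ when $b=2a$), and in particular that $n+a$ is still unlabeled when iteration $k$ begins --- this is easy (all previously labeled positions are $n_j$ or $n_j+a$ with $n_j<n$) but must genuinely be carried in the induction, since the whole argument collapses if $n+a$ ever fails to receive the label $k$; the paper's contradiction argument needs no information about $\G_{\{a,b\}}$ beyond the mex recurrence. In return, your version is uniform with the surrounding lemmas and, as you observe, shows that the hypothesis $c\neq 2b$ is never consumed: the membership test for $n+c$ already fails at $m=n+a$ because $c-a\notin\{a,b,c\}$, so whether $n+b$ carries the label $k$ is irrelevant. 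That observation is consistent with the paper, since the case $c=2b$ is separately covered by the lemma $\G_{\{a,b,2b\}}=\G_{\{a\}}$ together with the fact that $\G_{\{a,b\}}=\G_{\{a\}}$ whenever $b\neq 2a$. Your explicit flagging of the ordering convention $a<b<c$ is also worthwhile; the paper relies on the same convention tacitly (for instance when asserting that the options of $n-c+b$ contain those of $n-c$).
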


\begin{proof}
 Assume not, and let $n$ be the first pile on which they differ. That is, 
$k=\G_{\{a,b,c\}}(n) \neq \G_{\{a,b\}}(n)=\ell$ and 
$\G_{\{a,b,c\}}(m) = \G_{\{a,b\}}(m)$ for all $m<n$. The mex sets for $\G_{\{a,b,c\}}(n)$ and $\G_{\{a,b\}}(n)$ only differ by one element, $\G(n-c)$. Since that one element is causing the first difference, we must have $\G(n-c)=k$.
 
\begin{tabular}{ccccccccccccccccccccccc}
\\
 $\G_{\{a,b\}}(i)$  & \s  \s  $k$&      \s  \s  \s  \s  \s  \s  \s  \s  \s  \s  \s  \s  \s  $\ell$&        \s  \sp \\
 $i$                &    &  &\tiny $n-c$&   &    &   &\tiny $n-c+a$&   &   &   &   &\tiny $n-c+b$&   &   &   &   &\tiny $n$&   &    \\
\\
 $\G_{X}(i)$& \s  \s  $k$ &     \s  \s  \s  \s  \s  \s  \s  \s  \s  \s  \s  \s  \s  $k$&      \s  \sp \\
 $i$                &    &  &\tiny $n-c$&   &   &    &\tiny $n-c+a$&   &   &   &   &\tiny $n-c+b$&   &   &   &   &\tiny $n$&   &    \\
\\
\end{tabular}

If $\G( (n-c) + a) = k$ then since $\G_X(n)=k$, we must have that $n-c+a$ is not an option of $n$. This means $n-a=n-c+a$ or $n-b = n-c+a$. However, this implies $c=2a$ or $c=a+b$. 

If $\G( (n-c) + a) \neq k$, then in $\{a,b\}$ the options of $n-c+b$ are a superset of the options of $n-c$, so $\G_{\{a,b\}}(n-c+b)\geq k$. For $n-c<m<n-c+b$, $n-c$ is an option of $m$, so $\G_{\{a,b\}}(m)\neq k$. 
Since $n-c$ is not an option of $n-c+b$ we have $\G_{\{a,b\}}(n-c+b)=k$.

\begin{tabular}{ccccccccccccccccccccccc}
\\
 $\G_{\{a,b\}}(i)$  & \s  \s  $k$&      \s  \s  \s  \s  \s  \s  \s  \s  $k$&  \s  \s  \s  \s  $\ell$&        \s  \sp \\
 $i$                &    &  &\tiny $n-c$&   &    &   &\tiny $n-c+a$&   &   &   &   &\tiny $n-c+b$&   &   &   &   &\tiny $n$&   &    \\
\\
 $\G_{X}(i)$        & \s  \s  $k$ &     \s  \s  \s  \s  \s  \s  \s  \s  $k$&  \s  \s  \s  \s  $k$   &      \s  \sp \\
 $i$                &    &  &\tiny $n-c$&   &   &    &\tiny $n-c+a$&   &   &   &   &\tiny $n-c+b$&   &   &   &   &\tiny $n$&   &    \\
\\
\end{tabular}

However now we have that $n-c+b$ is not an option of $n$, so $n-a=n-c+b$ or $n-b=n-c+b$. This implies that $c=a+b$ or $c=2b$.

\end{proof}

\begin{thm}
 If $|X|=3$ then $\G_X$ is purely arithmetic periodic.
\end{thm}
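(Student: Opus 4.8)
The plan is to prove the theorem purely by assembling the four lemmas of this section together with Siegel's result \cite{S} that $\G_Y$ is purely arithmetic periodic whenever $|Y|\le 2$. Each of the three reduction lemmas asserts an equality of nim sequences $\G_X=\G_Y$ with $|Y|\le 2$, so pure arithmetic periodicity transfers verbatim to $\G_X$; and Lemma \ref{a+b} gives pure periodicity, which is the special case of pure arithmetic periodicity with saltus $0$. Hence the only real work is to check that these lemmas are collectively exhaustive: every three-element $X$ must satisfy the hypotheses of at least one of them. I would relabel so that $X=\{a,b,c\}$ with $a<b<c$ and then carry out a case split that always lands in exactly one lemma.

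First I would dispose of the case $b=2a$. Here $X=\{a,2a,c\}$ with $c>2a$, and I would apply the lemma that $\G_{\{a,b,2a\}}=\G_{\{a,2a\}}$ with $c$ playing the role of its free element: its hypotheses $c>a$ and $c\ne 2a$ both hold because $c>2a$, giving $\G_X=\G_{\{a,2a\}}$. This one case absorbs every set in which the smallest element's double is the middle element, irrespective of $c$, and in particular it is where I would route the degenerate set $\{a,2a,3a\}$: although this is of the form $\{a,b,a+b\}$, it has $b=2a$ and so falls outside the hypothesis $b\ne 2a$ of Lemmas \ref{15} and \ref{a+b}, which is exactly why it must be handled by the doubling lemma instead.

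When $b\ne 2a$ I would split on $c$. Because $a<b$, the three equations $c=a+b$, $c=2a$, $c=2b$ are pairwise incompatible, so exactly one of four possibilities holds. If $c=a+b$, then $b>a$ and $b\ne 2a$, so Lemma \ref{a+b} gives pure periodicity. If $c=2a$, then $b<c=2a$ forces $b\ne 2a$, and the lemma $\G_{\{a,b,2a\}}=\G_{\{a,2a\}}$ applies. If $c=2b$, the standing hypothesis $b\ne 2a$ is exactly what the lemma $\G_{\{a,b,2b\}}=\G_{\{a\}}$ needs. Finally, if $c\notin\{a+b,2a,2b\}$, the lemma $\G_{\{a,b,c\}}=\G_{\{a,b\}}$ applies directly. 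In each branch the target set has size at most $2$ or is handled by Lemma \ref{a+b}, so $\G_X$ is purely arithmetic periodic.

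The step I expect to take the most care is not any computation but the verification of exhaustiveness together with the correct routing of the sets that satisfy two of the special relations at once. The crucial subtlety is that $c=a+b$ and $b=2a$ can co-occur (precisely for $\{a,2a,3a\}$), where Lemma \ref{a+b} is unavailable because its proof invokes Lemma \ref{15} under the hypothesis $b\ne 2a$; peeling off the case $b=2a$ before splitting on $c$ is what keeps the analysis clean. Everything that remains is the routine bookkeeping of confirming, in each branch, that the invoked lemma's hypotheses are met and that its conclusion is a set of size at most $2$ (or Lemma \ref{a+b}).
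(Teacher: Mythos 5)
Your strategy is the paper's own: its proof of this theorem is a single sentence deferring to the preceding lemmas, so the exhaustiveness check you carry out is exactly the content that needs to be supplied, and most of your routing is correct. But the one case you rightly single out as delicate is sent to a lemma that fails there. You route $\{a,2a,3a\}$ to the lemma $\G_{\{a,b,2a\}}=\G_{\{a,2a\}}$ with the free element equal to $3a$; the printed hypotheses ($b>a$, $b\neq 2a$) are indeed satisfied, but the conclusion is false in that instance. Take $a=1$: one computes $\G_{\{1,2,3\}}(n)=\lfloor n/4\rfloor$ while $\G_{\{1,2\}}(n)=\lfloor n/3\rfloor$, and these already differ at $n=3$. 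The failure is visible inside that lemma's proof: the step ``$n+b$ has $n+a$ as an option'' needs $b-a\notin X$, which breaks precisely when $b=3a$ (then $b-a=2a\in X$), and in that case the FES algorithm places a \emph{fourth} copy of each nimber at $n+3a$. So the doubling lemma silently requires $b\neq 3a$, and $\{a,2a,3a\}$ genuinely escapes all four lemmas as proved (Lemma \ref{a+b} is unavailable because Lemma \ref{15} needs $b\neq 2a$, exactly as you observe). A complete proof must handle this one family separately --- e.g.\ by the same boundary-pattern induction one checks that for $X=\{a,2a,3a\}$ each nimber $k$ occupies precisely the four positions $n,n+a,n+2a,n+3a$, which gives pure arithmetic periodicity directly. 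To be fair, this gap is inherited from the paper, whose one-line proof does not address $\{a,2a,3a\}$ at all; you deserve credit for isolating the exact configuration where the lemmas rub against each other, even though the patch you chose asserts a false identity.

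A second, cosmetic point: you describe Lemma \ref{a+b} as giving pure periodicity ``with saltus $0$.'' By Lemma \ref{15} every nimber occurs exactly three times in $\G_{\{a,b,a+b\}}$, so that nim sequence is unbounded and cannot literally satisfy $\G(n+p)=\G(n)$; what the lemma's proof actually establishes is that the sequence of boundary patterns is purely periodic, which by the paper's stated equivalence means the nim sequence is purely \emph{arithmetic} periodic with positive saltus. The conclusion you need still follows, but the phrase ``saltus $0$'' should be dropped. The rest of your case split --- peeling off $b=2a$ first, then observing that for $a<b$ with $b\neq 2a$ the relations $c=a+b$, $c=2a$, $c=2b$ are pairwise incompatible so exactly one lemma applies --- is correct and is a genuine improvement in rigor over the paper's version.
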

\begin{proof}
 The proof breaks down into three cases, which are taken care of in the previous three lemmas.
\end{proof}

\section{Conjectures \& Future Work}
The proof that $\G_{\{a,b,a+b\}}$ is purely periodic gives no insight into the length of the period. 
However there 
seem to be some obvious patterns in the case where $b>3a$. 
Lemma \ref{15} implies that the period is three times the saltus, so we only need to consider the saltus. It is known that the period for $\{na, nb, n(a+b)\}$ is $n$ times the period for $\{a, b, a+b\}$ so we only need to consider FES sets where $a$ and $b$ are relatively prime. 

\begin{conj}
 Let $a$ and $b$ be such that $b>3a$ and $\gcd(a, b)=1$ and let $p$ be the period of the nim
 sequence for the FES set $\{a, b, a+b\}$. If there exists an $m$ that is a multiple of $2a$ with $b<m<a+b$ 
$p=3am$. If no such $m$ exists, then there is some other $n$ with $b<n<a+b$ such that $p=3an$.
\end{conj}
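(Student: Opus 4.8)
The plan is to reduce the determination of $p$ to counting how many distinct nimbers occur in one period, and then to obtain that count from a close analysis of how the three occurrences of each nimber are placed. Write $f_k=\min\{i:\G(i)=k\}$. By Lemma \ref{a+b} the boundary-pattern sequence is a single cycle through its initial (all-blank) state; this state recurs after exactly $s$ steps, where $s$ is the saltus, and over those steps the frontier $f_k$ advances from $f_0=0$ to $f_s=p$. By Lemma \ref{15} each nimber occupies three positions --- $f_k$, then $f_k+a$ or $f_k+b$, then $f_k+a+b$ --- and these triples partition $\N$; since pure periodicity makes each of the three occurrence-sets $p$-periodic with the same number of elements per period, counting positions in $[0,p)$ gives $p=3s$. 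Hence it suffices to show $a\mid s$ together with $b<s/a<a+b$, and that $s/a$ equals the multiple of $2a$ in $(b,a+b)$ whenever one exists.

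Call $k$ of \emph{type A} if its second occurrence is at $f_k+a$ and of \emph{type B} if at $f_k+b$. The FES algorithm amounts to a left-to-right greedy rule: the first unclaimed position $x$ becomes a new first occurrence, immediately claiming $x+a+b$ and then claiming $x+a$ (type A) if that slot is free and $x+b$ (type B) otherwise, Lemma \ref{15} guaranteeing $x+b$ is free in the latter case. The hypothesis $b>3a$ localizes this process: near the frontier the only claims present come from $+a$ second occurrences of very recent first occurrences and from $+b$ and $+(a+b)$ occurrences initiated about $b$ positions earlier. I would first establish, by induction along the boundary-pattern dynamics, that the types organize into maximal runs, each run starting from a boundary pattern of a restricted canonical shape. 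A basic constraint driving this is that no two first occurrences are exactly $a$ apart (the later one would already have been claimed as a $+a$ second occurrence), which fixes the fine spacing within each run.

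The number $2a$ enters through the phase of the returning second and third occurrences against the advancing frontier. Since $\gcd(a,b)=1$ (and $a\ge 2$, else $(b,a+b)$ contains no integer), a multiple of $2a$ lies in $(b,a+b)$ exactly when $b\bmod 2a>a$, equivalently when $\lfloor b/a\rfloor$ is odd. In that case the $+a$ and $+(a+b)$ streams re-enter the frontier in phase, the type-A and type-B runs lock into blocks of length exactly $2a$, and a direct count of the first occurrences over one boundary-pattern cycle yields $s=am$ with $m$ that multiple, whence $p=3s=3am$. When no multiple of $2a$ occurs in $(b,a+b)$, the runs should instead take two nearby lengths in a fixed proportion dictated by the same residue count, and I would show that their weighted total still gives $s=an$ for the integer $n\in(b,a+b)$ that the residue determines, so that $p=3an$.

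The main obstacle is pinning down $s/a$ in the second case, where the statement itself only asserts that some admissible $n$ exists: one must identify exactly which integer of $(b,a+b)$ occurs and prove the runs have the claimed two-length structure rather than drifting over the cycle. I expect this to require a sharp invariant for the boundary pattern at the start of each run, together with a three-distance (Beatty-type) accounting of where the $+a$, $+b$, and $+(a+b)$ claims fall relative to the frontier. Closing the argument then hinges on showing that the boundary pattern returns to the all-blank state after exactly $s$ steps with the frontier advanced by exactly $3s$, which is what links the local run analysis to the global period.
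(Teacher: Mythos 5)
First, note that this statement is a \emph{conjecture} in the paper: the authors give no proof at all, only the tables of computed saltus values from which it was extrapolated. So there is no argument of theirs to compare yours against; the only question is whether your proposal closes the problem, and it does not. Your opening reduction is sound and matches the paper's own remark that Lemma \ref{15} forces $p=3s$ where $s$ is the saltus, so the task is indeed to show $a\mid s$ and to locate $s/a$ in the interval $(b,a+b)$. Your observation that no two first occurrences can differ by exactly $a$ is also correct and is a sensible starting invariant, and the reduction via Lemma \ref{a+b} to one cycle of boundary patterns is legitimate.

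The gap is everything after that. The claims that the types A and B ``organize into maximal runs,'' that these runs ``lock into blocks of length exactly $2a$'' when $b\bmod 2a>a$, and that otherwise they ``take two nearby lengths in a fixed proportion'' constitute the entire content of the conjecture, and you assert them rather than prove them; the phrase ``a direct count \dots yields $s=am$'' is doing all the work and is never carried out. In the second case you explicitly decline to identify $n$, so even the bare existence claim ($a\mid s$ together with $b<s/a<a+b$) is unestablished. There is also an unhandled edge case: for $a=1$ the open interval $(b,b+1)$ contains no integer, yet the data shows the saltus is the even element of $\{b,a+b\}$, so the conjecture evidently intends non-strict inequalities somewhere; your parenthetical ``$a\ge 2$'' discards this case rather than resolving it. Similarly, your equivalence ``a multiple of $2a$ lies in $(b,a+b)$ iff $\lfloor b/a\rfloor$ is odd'' silently excludes the boundary case $b\equiv a\pmod{2a}$, where the multiple lands exactly at $a+b$. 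As it stands the proposal is a plausible research plan that is consistent with the tables (it predicts the observed saltus for $\{2,7,9\}$, $\{3,10,13\}$, $\{5,16,21\}$, and the absence of a multiple of $2a$ for $\{3,13,16\}$), but the conjecture remains open.
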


The following pages contain the data from which this conjecture was drawn.

\begin{tabular}{cccc}
\begin{tabular}{|c|c|}
\hline 
FES set & saltus\\
\hline\hline
\{1, 3, 4\}    &      1 * 4 \\
\hline
\{1, 4, 5\}    &      1 * 4 \\
\hline
\{1, 5, 6\}    &      1 * 6 \\
\hline
\{1, 6, 7\}    &      1 * 6 \\
\hline
\{1, 7, 8\}    &      1 * 8 \\
\hline
\{1, 8, 9\}    &      1 * 8 \\
\hline
\{1, 9, 10\}    &      1 * 10 \\
\hline
\{1, 10, 11\}    &      1 * 10 \\
\hline
\{1, 11, 12\}    &      1 * 12 \\
\hline
\{1, 12, 13\}    &      1 * 12 \\
\hline
\{1, 13, 14\}    &      1 * 14 \\
\hline
\{1, 14, 15\}    &      1 * 14 \\
\hline
\{1, 15, 16\}    &      1 * 16 \\
\hline
\{1, 16, 17\}    &      1 * 16 \\
\hline
\{1, 17, 18\}    &      1 * 18 \\
\hline
\{1, 18, 19\}    &      1 * 18 \\
\hline
\{1, 19, 20\}    &      1 * 20 \\
\hline
\{1, 20, 21\}    &      1 * 20 \\
\hline
\{1, 21, 22\}    &      1 * 22 \\
\hline
\{1, 22, 23\}    &      1 * 22 \\
\hline
\{1, 23, 24\}    &      1 * 24 \\
\hline
\{1, 24, 25\}    &      1 * 24 \\
\hline
\{1, 25, 26\}    &      1 * 26 \\
\hline
\{1, 26, 27\}    &      1 * 26 \\
\hline
\{1, 27, 28\}    &      1 * 28 \\
\hline
\{1, 28, 29\}    &      1 * 28 \\
\hline
\{1, 29, 30\}    &      1 * 30 \\
\hline
\{1, 30, 31\}    &      1 * 30 \\
\hline
\{1, 31, 32\}    &      1 * 32 \\
\hline
\{1, 32, 33\}    &      1 * 32 \\
\hline
\{1, 33, 34\}    &      1 * 34 \\
\hline
\{1, 34, 35\}    &      1 * 34 \\
\hline
\{1, 35, 36\}    &      1 * 36 \\
\hline
\{1, 36, 37\}    &      1 * 36 \\
\hline
\{1, 37, 38\}    &      1 * 38 \\
\hline
\{1, 38, 39\}    &      1 * 38 \\
\hline
\{1, 39, 40\}    &      1 * 40 \\
\hline
\{1, 40, 41\}    &      1 * 40 \\
\hline
\{1, 41, 42\}    &      1 * 42 \\
\hline
\{1, 42, 43\}    &      1 * 42 \\
\hline

\end{tabular}
&
\begin{tabular}{|c|c|}
\hline 
FES set & saltus\\
\hline\hline
\{2, 7, 9\}    &      2 * 8 \\
\hline
\{2, 9, 11\}    &      2 * 10 \\
\hline
\{2, 11, 13\}    &      2 * 12 \\
\hline
\{2, 13, 15\}    &      2 * 14 \\
\hline
\{2, 15, 17\}    &      2 * 16 \\
\hline
\{2, 17, 19\}    &      2 * 18 \\
\hline
\{2, 19, 21\}    &      2 * 20 \\
\hline
\{2, 21, 23\}    &      2 * 22 \\
\hline
\{2, 23, 25\}    &      2 * 24 \\
\hline
\{2, 25, 27\}    &      2 * 26 \\
\hline
\{2, 27, 29\}    &      2 * 28 \\
\hline
\{2, 29, 31\}    &      2 * 30 \\
\hline
\{2, 31, 33\}    &      2 * 32 \\
\hline
\{2, 33, 35\}    &      2 * 34 \\
\hline
\{2, 35, 37\}    &      2 * 36 \\
\hline
\{2, 37, 39\}    &      2 * 38 \\
\hline
\{2, 39, 41\}    &      2 * 40 \\
\hline
\{2, 41, 43\}    &      2 * 42 \\
\hline
\{2, 43, 45\}    &      2 * 44 \\
\hline
\{2, 45, 47\}    &      2 * 46 \\
\hline
\{2, 47, 49\}    &      2 * 48 \\
\hline
\{2, 49, 51\}    &      2 * 50 \\
\hline
\{2, 51, 53\}    &      2 * 52 \\
\hline
\{2, 53, 55\}    &      2 * 54 \\
\hline
\{2, 55, 57\}    &      2 * 56 \\
\hline
\{2, 57, 59\}    &      2 * 58 \\
\hline
\{2, 59, 61\}    &      2 * 60 \\
\hline
\{2, 61, 63\}    &      2 * 62 \\
\hline
\{2, 63, 65\}    &      2 * 64 \\
\hline
\{2, 65, 67\}    &      2 * 66 \\
\hline
\{2, 67, 69\}    &      2 * 68 \\
\hline
\{2, 69, 71\}    &      2 * 70 \\
\hline
\{2, 71, 73\}    &      2 * 72 \\
\hline
\{2, 73, 75\}    &      2 * 74 \\
\hline
\{2, 75, 77\}    &      2 * 76 \\
\hline
\{2, 77, 79\}    &      2 * 78 \\
\hline
\{2, 79, 81\}    &      2 * 80 \\
\hline
\{2, 81, 83\}    &      2 * 82 \\
\hline
\{2, 83, 85\}    &      2 * 84 \\
\hline
\{2, 85, 87\}    &      2 * 86 \\
\hline

\end{tabular}
&
\begin{tabular}{|c|c|}
\hline 
FES set & saltus\\
\hline\hline
\{3, 10, 13\}    &      3 * 12 \\
\hline
\{3, 11, 14\}    &      3 * 12 \\
\hline
\{3, 13, 16\}    &      3 * 14 \\
\hline
\{3, 14, 17\}    &      3 * 16 \\
\hline
\{3, 16, 19\}    &      3 * 18 \\
\hline
\{3, 17, 20\}    &      3 * 18 \\
\hline
\{3, 19, 22\}    &      3 * 20 \\
\hline
\{3, 20, 23\}    &      3 * 22 \\
\hline
\{3, 22, 25\}    &      3 * 24 \\
\hline
\{3, 23, 26\}    &      3 * 24 \\
\hline
\{3, 25, 28\}    &      3 * 26 \\
\hline
\{3, 26, 29\}    &      3 * 28 \\
\hline
\{3, 28, 31\}    &      3 * 30 \\
\hline
\{3, 29, 32\}    &      3 * 30 \\
\hline
\{3, 31, 34\}    &      3 * 32 \\
\hline
\{3, 32, 35\}    &      3 * 34 \\
\hline
\{3, 34, 37\}    &      3 * 36 \\
\hline
\{3, 35, 38\}    &      3 * 36 \\
\hline
\{3, 37, 40\}    &      3 * 38 \\
\hline
\{3, 38, 41\}    &      3 * 40 \\
\hline
\{3, 40, 43\}    &      3 * 42 \\
\hline
\{3, 41, 44\}    &      3 * 42 \\
\hline
\{3, 43, 46\}    &      3 * 44 \\
\hline
\{3, 44, 47\}    &      3 * 46 \\
\hline
\{3, 46, 49\}    &      3 * 48 \\
\hline
\{3, 47, 50\}    &      3 * 48 \\
\hline
\{3, 49, 52\}    &      3 * 50 \\
\hline
\{3, 50, 53\}    &      3 * 52 \\
\hline
\{3, 52, 55\}    &      3 * 54 \\
\hline
\{3, 53, 56\}    &      3 * 54 \\
\hline
\{3, 55, 58\}    &      3 * 56 \\
\hline
\{3, 56, 59\}    &      3 * 58 \\
\hline
\{3, 58, 61\}    &      3 * 60 \\
\hline
\{3, 59, 62\}    &      3 * 60 \\
\hline
\{3, 61, 64\}    &      3 * 62 \\
\hline
\{3, 62, 65\}    &      3 * 64 \\
\hline
\{3, 64, 67\}    &      3 * 66 \\
\hline
\{3, 65, 68\}    &      3 * 66 \\
\hline
\{3, 67, 70\}    &      3 * 68 \\
\hline
\{3, 68, 71\}    &      3 * 70 \\
\hline

\end{tabular}
&
\begin{tabular}{|c|c|}
\hline 
FES set & saltus\\
\hline\hline
\{4, 13, 17\}    &      4 * 16 \\
\hline
\{4, 15, 19\}    &      4 * 16 \\
\hline
\{4, 17, 21\}    &      4 * 18 \\
\hline
\{4, 19, 23\}    &      4 * 22 \\
\hline
\{4, 21, 25\}    &      4 * 24 \\
\hline
\{4, 23, 27\}    &      4 * 24 \\
\hline
\{4, 25, 29\}    &      4 * 26 \\
\hline
\{4, 27, 31\}    &      4 * 30 \\
\hline
\{4, 29, 33\}    &      4 * 32 \\
\hline
\{4, 31, 35\}    &      4 * 32 \\
\hline
\{4, 33, 37\}    &      4 * 34 \\
\hline
\{4, 35, 39\}    &      4 * 38 \\
\hline
\{4, 37, 41\}    &      4 * 40 \\
\hline
\{4, 39, 43\}    &      4 * 40 \\
\hline
\{4, 41, 45\}    &      4 * 42 \\
\hline
\{4, 43, 47\}    &      4 * 46 \\
\hline
\{4, 45, 49\}    &      4 * 48 \\
\hline
\{4, 47, 51\}    &      4 * 48 \\
\hline
\{4, 49, 53\}    &      4 * 50 \\
\hline
\{4, 51, 55\}    &      4 * 54 \\
\hline
\{4, 53, 57\}    &      4 * 56 \\
\hline
\{4, 55, 59\}    &      4 * 56 \\
\hline
\{4, 57, 61\}    &      4 * 58 \\
\hline
\{4, 59, 63\}    &      4 * 62 \\
\hline
\{4, 61, 65\}    &      4 * 64 \\
\hline
\{4, 63, 67\}    &      4 * 64 \\
\hline
\{4, 65, 69\}    &      4 * 66 \\
\hline
\{4, 67, 71\}    &      4 * 70 \\
\hline
\{4, 69, 73\}    &      4 * 72 \\
\hline
\{4, 71, 75\}    &      4 * 72 \\
\hline
\{4, 73, 77\}    &      4 * 74 \\
\hline
\{4, 75, 79\}    &      4 * 78 \\
\hline
\{4, 77, 81\}    &      4 * 80 \\
\hline
\{4, 79, 83\}    &      4 * 80 \\
\hline
\{4, 81, 85\}    &      4 * 82 \\
\hline
\{4, 83, 87\}    &      4 * 86 \\
\hline
\{4, 85, 89\}    &      4 * 88 \\
\hline
\{4, 87, 91\}    &      4 * 88 \\
\hline
\{4, 89, 93\}    &      4 * 90 \\
\hline
\{4, 91, 95\}    &      4 * 94 \\
\hline

\end{tabular}
\end{tabular}

\begin{tabular}{cccc}
\begin{tabular}{|c|c|}
\hline 
FES set & saltus\\
\hline\hline
\{5, 16, 21\}    &      5 * 20 \\
\hline
\{5, 17, 22\}    &      5 * 20 \\
\hline
\{5, 18, 23\}    &      5 * 20 \\
\hline
\{5, 19, 24\}    &      5 * 20 \\
\hline
\{5, 21, 26\}    &      5 * 22 \\
\hline
\{5, 22, 27\}    &      5 * 24 \\
\hline
\{5, 23, 28\}    &      5 * 26 \\
\hline
\{5, 24, 29\}    &      5 * 28 \\
\hline
\{5, 26, 31\}    &      5 * 30 \\
\hline
\{5, 27, 32\}    &      5 * 30 \\
\hline
\{5, 28, 33\}    &      5 * 30 \\
\hline
\{5, 29, 34\}    &      5 * 30 \\
\hline
\{5, 31, 36\}    &      5 * 32 \\
\hline
\{5, 32, 37\}    &      5 * 34 \\
\hline
\{5, 33, 38\}    &      5 * 36 \\
\hline
\{5, 34, 39\}    &      5 * 38 \\
\hline
\{5, 36, 41\}    &      5 * 40 \\
\hline
\{5, 37, 42\}    &      5 * 40 \\
\hline
\{5, 38, 43\}    &      5 * 40 \\
\hline
\{5, 39, 44\}    &      5 * 40 \\
\hline
\{5, 41, 46\}    &      5 * 42 \\
\hline
\{5, 42, 47\}    &      5 * 44 \\
\hline
\{5, 43, 48\}    &      5 * 46 \\
\hline
\{5, 44, 49\}    &      5 * 48 \\
\hline
\{5, 46, 51\}    &      5 * 50 \\
\hline
\{5, 47, 52\}    &      5 * 50 \\
\hline
\{5, 48, 53\}    &      5 * 50 \\
\hline
\{5, 49, 54\}    &      5 * 50 \\
\hline
\{5, 51, 56\}    &      5 * 52 \\
\hline
\{5, 52, 57\}    &      5 * 54 \\
\hline
\{5, 53, 58\}    &      5 * 56 \\
\hline
\{5, 54, 59\}    &      5 * 58 \\
\hline
\{5, 56, 61\}    &      5 * 60 \\
\hline
\{5, 57, 62\}    &      5 * 60 \\
\hline
\{5, 58, 63\}    &      5 * 60 \\
\hline
\{5, 59, 64\}    &      5 * 60 \\
\hline
\{5, 61, 66\}    &      5 * 62 \\
\hline
\{5, 62, 67\}    &      5 * 64 \\
\hline
\{5, 63, 68\}    &      5 * 66 \\
\hline
\{5, 64, 69\}    &      5 * 68 \\
\hline

\end{tabular}
&
\begin{tabular}{|c|c|}
\hline 
FES set & saltus\\
\hline\hline
\{6, 19, 25\}    &      6 * 24 \\
\hline
\{6, 23, 29\}    &      6 * 24 \\
\hline
\{6, 25, 31\}    &      6 * 26 \\
\hline
\{6, 29, 35\}    &      6 * 34 \\
\hline
\{6, 31, 37\}    &      6 * 36 \\
\hline
\{6, 35, 41\}    &      6 * 36 \\
\hline
\{6, 37, 43\}    &      6 * 38 \\
\hline
\{6, 41, 47\}    &      6 * 46 \\
\hline
\{6, 43, 49\}    &      6 * 48 \\
\hline
\{6, 47, 53\}    &      6 * 48 \\
\hline
\{6, 49, 55\}    &      6 * 50 \\
\hline
\{6, 53, 59\}    &      6 * 58 \\
\hline
\{6, 55, 61\}    &      6 * 60 \\
\hline
\{6, 59, 65\}    &      6 * 60 \\
\hline
\{6, 61, 67\}    &      6 * 62 \\
\hline
\{6, 65, 71\}    &      6 * 70 \\
\hline
\{6, 67, 73\}    &      6 * 72 \\
\hline
\{6, 71, 77\}    &      6 * 72 \\
\hline
\{6, 73, 79\}    &      6 * 74 \\
\hline
\{6, 77, 83\}    &      6 * 82 \\
\hline
\{6, 79, 85\}    &      6 * 84 \\
\hline
\{6, 83, 89\}    &      6 * 84 \\
\hline
\{6, 85, 91\}    &      6 * 86 \\
\hline
\{6, 89, 95\}    &      6 * 94 \\
\hline
\{6, 91, 97\}    &      6 * 96 \\
\hline
\{6, 95, 101\}    &      6 * 96 \\
\hline
\{6, 97, 103\}    &      6 * 98 \\
\hline
\{6, 101, 107\}    &      6 * 106 \\
\hline
\{6, 103, 109\}    &      6 * 108 \\
\hline
\{6, 107, 113\}    &      6 * 108 \\
\hline
\{6, 109, 115\}    &      6 * 110 \\
\hline
\{6, 113, 119\}    &      6 * 118 \\
\hline
\{6, 115, 121\}    &      6 * 120 \\
\hline
\{6, 119, 125\}    &      6 * 120 \\
\hline
\{6, 121, 127\}    &      6 * 122 \\
\hline
\{6, 125, 131\}    &      6 * 130 \\
\hline
\{6, 127, 133\}    &      6 * 132 \\
\hline
\{6, 131, 137\}    &      6 * 132 \\
\hline
\{6, 133, 139\}    &      6 * 134 \\
\hline
\{6, 137, 143\}    &      6 * 142 \\
\hline

\end{tabular}
&
\begin{tabular}{|c|c|}
\hline 
FES set & saltus\\
\hline\hline
\{7, 22, 29\}    &      7 * 28 \\
\hline
\{7, 23, 30\}    &      7 * 28 \\
\hline
\{7, 24, 31\}    &      7 * 28 \\
\hline
\{7, 25, 32\}    &      7 * 28 \\
\hline
\{7, 26, 33\}    &      7 * 28 \\
\hline
\{7, 27, 34\}    &      7 * 28 \\
\hline
\{7, 29, 36\}    &      7 * 30 \\
\hline
\{7, 30, 37\}    &      7 * 32 \\
\hline
\{7, 31, 38\}    &      7 * 34 \\
\hline
\{7, 32, 39\}    &      7 * 36 \\
\hline
\{7, 33, 40\}    &      7 * 38 \\
\hline
\{7, 34, 41\}    &      7 * 40 \\
\hline
\{7, 36, 43\}    &      7 * 42 \\
\hline
\{7, 37, 44\}    &      7 * 42 \\
\hline
\{7, 38, 45\}    &      7 * 42 \\
\hline
\{7, 39, 46\}    &      7 * 42 \\
\hline
\{7, 40, 47\}    &      7 * 42 \\
\hline
\{7, 41, 48\}    &      7 * 42 \\
\hline
\{7, 43, 50\}    &      7 * 44 \\
\hline
\{7, 44, 51\}    &      7 * 46 \\
\hline
\{7, 45, 52\}    &      7 * 48 \\
\hline
\{7, 46, 53\}    &      7 * 50 \\
\hline
\{7, 47, 54\}    &      7 * 52 \\
\hline
\{7, 48, 55\}    &      7 * 54 \\
\hline
\{7, 50, 57\}    &      7 * 56 \\
\hline
\{7, 51, 58\}    &      7 * 56 \\
\hline
\{7, 52, 59\}    &      7 * 56 \\
\hline
\{7, 53, 60\}    &      7 * 56 \\
\hline
\{7, 54, 61\}    &      7 * 56 \\
\hline
\{7, 55, 62\}    &      7 * 56 \\
\hline
\{7, 57, 64\}    &      7 * 58 \\
\hline
\{7, 58, 65\}    &      7 * 60 \\
\hline
\{7, 59, 66\}    &      7 * 62 \\
\hline
\{7, 60, 67\}    &      7 * 64 \\
\hline
\{7, 61, 68\}    &      7 * 66 \\
\hline
\{7, 62, 69\}    &      7 * 68 \\
\hline
\{7, 64, 71\}    &      7 * 70 \\
\hline
\{7, 65, 72\}    &      7 * 70 \\
\hline
\{7, 66, 73\}    &      7 * 70 \\
\hline
\{7, 67, 74\}    &      7 * 70 \\
\hline

\end{tabular}
&
\begin{tabular}{|c|c|}
\hline 
FES set & saltus\\
\hline\hline
\{8, 25, 33\}    &      8 * 32 \\
\hline
\{8, 27, 35\}    &      8 * 32 \\
\hline
\{8, 29, 37\}    &      8 * 32 \\
\hline
\{8, 31, 39\}    &      8 * 32 \\
\hline
\{8, 33, 41\}    &      8 * 34 \\
\hline
\{8, 35, 43\}    &      8 * 38 \\
\hline
\{8, 37, 45\}    &      8 * 42 \\
\hline
\{8, 39, 47\}    &      8 * 46 \\
\hline
\{8, 41, 49\}    &      8 * 48 \\
\hline
\{8, 43, 51\}    &      8 * 48 \\
\hline
\{8, 45, 53\}    &      8 * 48 \\
\hline
\{8, 47, 55\}    &      8 * 48 \\
\hline
\{8, 49, 57\}    &      8 * 50 \\
\hline
\{8, 51, 59\}    &      8 * 54 \\
\hline
\{8, 53, 61\}    &      8 * 58 \\
\hline
\{8, 55, 63\}    &      8 * 62 \\
\hline
\{8, 57, 65\}    &      8 * 64 \\
\hline
\{8, 59, 67\}    &      8 * 64 \\
\hline
\{8, 61, 69\}    &      8 * 64 \\
\hline
\{8, 63, 71\}    &      8 * 64 \\
\hline
\{8, 65, 73\}    &      8 * 66 \\
\hline
\{8, 67, 75\}    &      8 * 70 \\
\hline
\{8, 69, 77\}    &      8 * 74 \\
\hline
\{8, 71, 79\}    &      8 * 78 \\
\hline
\{8, 73, 81\}    &      8 * 80 \\
\hline
\{8, 75, 83\}    &      8 * 80 \\
\hline
\{8, 77, 85\}    &      8 * 80 \\
\hline
\{8, 79, 87\}    &      8 * 80 \\
\hline
\{8, 81, 89\}    &      8 * 82 \\
\hline
\{8, 83, 91\}    &      8 * 86 \\
\hline
\{8, 85, 93\}    &      8 * 90 \\
\hline
\{8, 87, 95\}    &      8 * 94 \\
\hline
\{8, 89, 97\}    &      8 * 96 \\
\hline
\{8, 91, 99\}    &      8 * 96 \\
\hline
\{8, 93, 101\}    &      8 * 96 \\
\hline
\{8, 95, 103\}    &      8 * 96 \\
\hline
\{8, 97, 105\}    &      8 * 98 \\
\hline
\{8, 99, 107\}    &      8 * 102 \\
\hline
\{8, 101, 109\}    &      8 * 106 \\
\hline
\{8, 103, 111\}    &      8 * 110 \\
\hline

\end{tabular}
\end{tabular}


\begin{thebibliography}{9}

\bibitem{S}
  Angela Annette Siegel. {\em Finite Excluded Subtraction Sets and Infinite
    Modular Nim,} M.Sc. Thesis, Dalhousie University, 2005.
\bibitem{LIP}
 Michael H. Albert, Richard J. Nowakowski, and David Wolfe. 
 {\em Lessons in Play.} A K
Peters, 2007.
\end{thebibliography}
\end{document}